\definecolor{bluepigment}{rgb}{0.2, 0.2, 0.6}
\newcommand{\Z}{\mathbb Z}
\newcommand{\N}{\mathbb N}
\newcommand{\R}{\mathbb R}
\renewcommand{\P}{\mathbb P}
\newcommand{\E}{\mathbb E}
\newtheorem{theorem}{Theorem}[section]
\newtheorem{lemma}{Lemma}[section]
\newtheoremstyle{remark}{3pt}{3pt}{}{}{\bfseries}{:}{.5em}{}
\theoremstyle{remark}
\newtheorem{remark}{Remark}[section]
\begin{document}
	\title[Last Progeny Modified BRW]{Large Deviations for the Right-Most Position of a Last Progeny Modified Branching Random Walk}
	\author[Partha Pratim Ghosh]{Partha Pratim Ghosh}  
	\date{January 28, 2022}
	\address[Partha Pratim Ghosh]{Theoretical Statistics and Mathematics Unit \\
		Indian Statistical Institute, Delhi Centre \\ 
		7 S. J. S. Sansanwal Marg \\
		New Delhi 110016 \\
		INDIA}         
	\email{p.pratim.10.93@gmail.com} 	
	
	\maketitle
	
	\begin{abstract}		
In this work, we consider a modification of the usual \emph{Branching Random Walk (BRW)}, where we give certain independent and identically distributed  (i.i.d.) displacements to all the particles at the $n$-th generation, which may be different from the driving increment distribution. This model was first introduced by
Bandyopadhyay and Ghosh \cite{BG21} and they termed it as 
\emph{Last Progeny Modified Branching Random Walk (LPM-BRW)}. Under very minimal assumptions, we derive the
\emph{large deviation principle (LDP)} for the right-most position of a particle
in generation $n$. As a byproduct, we also complete the LDP for the
classical model, which complements the earlier work by 
 Gantert and  H\"{o}felsauer  \cite{GH18}.
	\end{abstract}
	
	\vspace{0.1in}
	\noindent
	{\bf Key words:} Branching random walk ; Large deviations.
	
	\vspace{0.1in}
	\noindent
	{\bf MSC 2020 Subject Classification:} 60F10 ; 60J80 ; 60G50.

%%%%%%%%%%%%%%%%%%%%%%%%%%%%%%%%%%%%%%%%%%%%%%%%%%%%%%%%%%%%%%%%%%%
%%                                                               %%
%% No need for \maketitle.                                       %%
%%                                                               %%
%%%%%%%%%%%%%%%%%%%%%%%%%%%%%%%%%%%%%%%%%%%%%%%%%%%%%%%%%%%%%%%%%%%

%%%%%%%%%%%%%%%%%%%%%%%%%%%%%%%%%%%%%%%%%%%%%%%%%%%%%%%%%%%%%%%%%%%
%%                                                               %%
%% Please replace what follows by the body of your article       %%
%% (up to the bibliography):                                     %%
%%                                                               %%
%%%%%%%%%%%%%%%%%%%%%%%%%%%%%%%%%%%%%%%%%%%%%%%%%%%%%%%%%%%%%%%%%%%

\section{Introduction}
\subsection{Background and Motivation}
A branching random walk on the real line is a discrete-time stochastic process, which can be described as follows:

Let $X$ and $N$ be two random variables taking values in $\R$ and $\N$, respectively.   At the $0$-th generation, we start with an initial particle at the origin. At time $n = 1$, the particle dies and gives birth to a random number of offspring, distributed according to $N$. The offspring are then displaced from their parent's position by i.i.d. copies of $X$. For $n\geq2$, the particles at generation $(n-1)$ behave independently and identically of the particles up to generation $(n-1)$.

If we denote the number of particles in generation $n$ by $N_n$, then from the definition, it follows that $\{N_n\}_{n\geq0}$ is a Galton-Watson branching process with progeny distribution given by $N$. So the backbone of the process is a branching process tree with weighted edges. Here the weights represent the displacements of the particles relative to their respective parent. We write $|v|=n$ if an individual $v$ is in the  $n$-th generation, and its position $S(v)$ is defined as the sum of the edge-weights of the unique path connecting $v$ to the root.  We shall call the process $\{S(v):|v|=n\}_{n\geq0}$ a \textit{Branching Random Walk (BRW)}. 

In this article, we consider a modified version of the BRW. The modification occurs only at the last generation, where we add i.i.d. displacements of a specific form. There are two parameters of this model. One is a positively supported measure, $\mu$, and the other is a positive real number, $\theta$, which should be viewed as a scaling parameter for the extra shift  we give to each particle at the $n$-th generation. The modification is as follows. At a generation $n\geq1$, we give additional displacement to each of the particles at the generation $n$, which are of the form $\frac{1}{\theta}\log(Y_v/E_v)$, where  $\{Y_v\}_{|v|=n}$ are i.i.d.  $\mu$,  $\{E_v\}_{|v|=n}$ are i.i.d.  $\mbox{Exponential}\left(1\right)$, and these two sequences are independent of each other and also of the BRW. This model was first introduced by Bandyopadhyay and Ghosh~\cite{BG21} and they refer to this new process as a  \textit{Last Progeny Modified Branching Random Walk (LPM-BRW)}. We denote by $R_n$ and $R_n^*\equiv R_n^*(\theta,\mu)$  the right-most positions of the $n$-th generation particles of the BRW and the LPM-BRW, respectively, i.e.,
\begin{equation}
	R_n:= \max_{|v|=n} S(v),\quad\quad R_n^*(\theta,\mu):=\max_{|v|=n} \Big\{ S(v)+\frac{1}{\theta}\log(Y_v/E_v) \Big\}.
	\label{def:rnrn*}
\end{equation}

The main motivation to study this model is that, due to the specific form of the additional shift at the last generation, there is a nice coupling of $R_n^*$ with a \textit{linear statistic} associated with BRW, which for $\mu=\delta_1$ becomes the well-known Biggins' martingale (see Bandyopadhyay and Ghosh~\cite{BG21}). On the other hand, as $\theta$ increases, $R_n^*$ becomes closer and closer to $R_n$. This novel connection is in fact the reason why the model intrigued us.

%In this article, we consider a new model where all particles at generation $n$ are given further displacements by a set of i.i.d. random variables $\{\frac{1}{\theta}\log(Y_v/E_v)\}_{|v|=n}$. Here $\theta>0$ is a scaling constant, $\{Y_v\}_{|v|=n}$ are i.i.d. with some positively supported measure, say, $\mu$, and $\{E_v\}_{|v|=n}$ are i.i.d.  $\mbox{Exponential}\left(1\right)$. These two sequences are independent of each other and also of the branching random walk. This model was first introduced by Bandyopadhyay and Ghosh~\cite{BG21} and they refer to this new process as a  \textit{Last Progeny Modified Branching Random Walk (LPM-BRW)}. The distribution $\mu$ and the scaling constant $\theta$ will be considered as two parameters of the new model.

Throughout this paper, we assume the followings:
\begin{itemize}[leftmargin=1.65cm] 
	\item[($\mathbf{A1}$)] \hypertarget{A1}{}
	The random variable $X$ is non-degenerate, i.e., $\P( X=t )<1$ for any $t\in\R$, and its moment-generating function is finite everywhere, i.e., for all $\lambda\in\R$,
	\[m(\lambda):=\E\big[e^{\lambda X}\big]<\infty.\]

	\item[($\mathbf{A2}$)] \hypertarget{A2}{}
	The underlying branching process is non-trivial, and the extinction probability   is zero, i.e., $\P(N=1)<1$, and $\P(N=0)=0$. Also, $N$ has finite $(1+p)$-th moment for some $p>0$.
	%the Kesten-Stigum condition, i.e., $\E\left[N\log N\right]<\infty$.
	
	\item[($\mathbf{A3}$)] \hypertarget{A3}{}
	For all $k\in\Z$,
	\[\int_{0}^{\infty}x^{k }\,d\mu(x) <\infty.\]
\end{itemize}

%Let $R_n$ and $R_n^*\equiv R_n^*(\theta,\mu)$ be the right-most positions of the n-th generation particles of the BRW and the LPM-BRW, respectively, i.e.,
%\begin{equation}
%	R_n:= \max_{|v|=n} S(v),\quad\quad R_n^*(\theta,\mu):=\max_{|v|=n} \Big\{ S(v)+\frac{1}{\theta}\log(Y_v/E_v) \Big\}.
%	\label{def:rnrn*}
%\end{equation}
We denote $\phi(\lambda):=\log m(\lambda)$, and $\nu(\lambda):= \phi(\lambda)+\log \E[N]$.
Note that $\nu$ is strictly convex and infinitely differentiable under assumptions (\hyperlink{A1}{$\mathbf{A1}$}) and (\hyperlink{A2}{$\mathbf{A2}$}) (see Proposition A.2 of Bandyopadhyay and Ghosh~\cite{BG21}). 
We define
\[
\theta_0:=\inf\Big\{ \theta>0 : \frac{\nu(\theta)}{\theta}= \nu'(\theta) \Big\}.
\]
Since $\nu(\theta)$ is strictly convex, the above set is at most singleton. If it is a singleton, then  $\theta_0$ is the  unique point in $(0,\infty)$ such that a tangent from the origin to the graph of $\nu(\theta)$ touches the graph at $\theta=\theta_0$. And if it is empty, then by definition $\theta_0$ takes value $\infty$, and there is no tangent from the origin to the graph of $\nu(\theta)$ on the right half-plane.

Under fairly general assumptions on the distribution of $X$ and $N$,  Hammersley~\cite{H74}, Kingman~\cite{K75}, and Biggins~\cite{B76}   showed that 
\begin{equation}
	\frac{R_n}{n}\rightarrow \frac{\nu(\theta_0)}{\theta_0} \text{, a.s., as } n\rightarrow\infty.
	\label{conv:rn}
\end{equation}

Similar convergence result for LPM-BRW was proved by Bandyopadhyay and Ghosh \cite{BG21}. They showed that for any $\theta>0$, almost surely
\begin{equation}
	\frac{R_n^*(\theta,\mu)}{n} \rightarrow c(\theta):= \left\{ \begin{array}{ll}
		\frac{\nu(\theta)}{\theta}, & \text{ if } \theta<\theta_0\leq\infty;\\[.25cm]
		\frac{\nu(\theta_0)}{\theta_0}, & \text{ if } \theta_0\leq\theta<\infty.
	\end{array} \right. 
	\label{conv:rn*}
\end{equation}
Therefore, we have 
\begin{align*}
	&\lim_{n\rightarrow\infty}\P\Big( \frac{R_n^*(\theta,\mu)}{n}> x \Big)=0 \text{ for } x>c(\theta); \text{ and }\\[.15cm]
	& \lim_{n\rightarrow\infty}\P\Big( \frac{R_n^*(\theta,\mu)}{n}< x \Big)=0 \text{ for } x<c(\theta).
\end{align*}
This paper investigates the exponential decay rates of these probabilities, which is in essence a large deviation (LDP) problem.

%This work investigates the exponential decay rates of the probabilities, $\P\big( {R_n^*(\theta,\mu)}/{n}> x \big)$ for $x>c(\theta)$ and $\P\big( {R_n^*(\theta,\mu)}/{n}< x \big)$ for $x<c(\theta) $, 
%which is in essence a large deviation (LDP) problem.

%The LDP for $\{R_n/n\}_{n\geq1}$ was calculated by Gantert and  H\"{o}felsauer~\cite{GH18}. Although their result is not complete. In our work on Theorem~\ref{ratefnrn}, we have completed that part also.

\subsection{Main Results}
Let $\{X_n\}_{n\geq 1}$ be i.i.d. copies of $X$. We define $\mathsf{S}_n:=\sum_{i=1}^n X_i$.
It follows from  Cram\'{e}r's theorem (see Dembo and Zeitouni~\cite{DZ10}) that the laws of $\{\mathsf{S}_n/n\}_{n\geq1}$ satisfy the large deviation principle with the rate function 
\[
I(x):=\sup_{\lambda\in\R}\left\{ \lambda x-\phi(\lambda)\right\} \text{ for } x\in\R.
\]

	From Theorem 1 of Rockafellar~\cite{RTR67}, we know that $I(x)$ is strictly convex and differentiable on the interior of its effective domain $\mathcal{D}_I:=\{x\in \R : I(x)<\infty\}$ with
	$ I'(x)=(\phi')^{-1}(x)$.
	This implies
	$I'\left(\E[X]\right)=0$ and $\lim_{x\,\downarrow\,\inf \mathcal{D}_I } I'(x)=-\infty$.
	Therefore, whenever $\rho:=-\log\P(N=1)$ is finite, there exists a unique point $a_{\theta}^{\rho}\in \left(\inf \mathcal{D}_I ,\E[X]\right)$ such that a tangent from the point $\left(c(\theta),0\right)$ to the graph of $I(x)+\rho$ touches the graph at $x=a_{\theta}^{\rho}$, i.e., $a_{\theta}^{\rho}$ satisfies
	\begin{equation*}
		\frac{I\left(a_{\theta}^{\rho}\right)+\rho}{a_{\theta}^{\rho}-c(\theta)}=I'\left(a_{\theta}^{\rho}\right).\label{athetarho}
	\end{equation*}

 We denote
$d(\theta):= \max\left\{c(\theta), \phi'(\theta) \right\}$.
Then we have

\begin{theorem}
	\label{ratefn}
	The laws of $\{  {R_n^* (\theta,\mu) /n} \}_{n\geq1}$ satisfy the large deviation principle with the rate function 
	\[
	\Psi_{\theta}(x):=\left\{
	\begin{array}{lll}
		\theta x-\phi(\theta)-\log\E[N],&\text{if } x> d(\theta);&\text{\hyperref[proofi]{(i)}}\\[.15cm]
		 I(x)-\log\E[N],&\text{if } c(\theta)<x\leq d(\theta);&\text{\hyperref[proofii]{(ii)}}\\[.15cm]
		 0,&\text{if }x=c(\theta);&\text{\hyperref[proofiii]{(iii)}}\\[.15cm]
		 I'\big(a_{\theta}^{\rho}\big)\left(x-c(\theta)\right),
		&\text{if $a_{\theta}^{\rho}\leq x< c(\theta)$ and $\rho<\infty$};&\text{\hyperref[proofivv]{(iv)}}\\[.15cm]
		 I(x)+\rho ,&\text{if $x< a_{\theta}^{\rho}$ and $\rho<\infty$};&\text{\hyperref[proofivv]{(v)}}\\[.15cm]
		 \infty,& \text{if $x<c(\theta)$ and $\rho=\infty$}.&\text{\hyperref[proofvi]{(vi)}}
	\end{array}
	\right.
	\]
\end{theorem}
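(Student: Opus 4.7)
The plan is to establish matching upper and lower bounds on $\P(R_n^*(\theta,\mu)/n>x)$ for $x>c(\theta)$ (upper tail, cases (i)--(ii)) and on $\P(R_n^*(\theta,\mu)/n<x)$ for $x<c(\theta)$ (lower tail, cases (iv)--(vi)); case (iii) is immediate from the a.s.\ convergence (\ref{conv:rn*}).

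\textbf{Upper tail.} The engine is the many-to-one identity
\[
\E\bigl[\#\{|v|=n:S(v)+Z_v>nx\}\bigr]=(\E[N])^n\,\P(\mathsf{S}_n+Z>nx),
\]
which is valid because the extra shifts $Z_v:=\theta^{-1}\log(Y_v/E_v)$ are i.i.d.\ and independent of the tree. Assumption (\hyperlink{A3}{$\mathbf{A3}$}) forces $\E[e^{\lambda Z}]<\infty$ exactly for $\lambda<\theta$ with only a simple pole at $\theta$, so a Chernoff bound optimised over $\lambda\in[0,\theta]$ yields $\tfrac{1}{n}\log\P(\mathsf{S}_n+Z>nx)\to -J(x)$, where $J(x)=I(x)$ when $x\le\phi'(\theta)$ and $J(x)=\theta x-\phi(\theta)$ otherwise. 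Markov's inequality then produces the upper bound $\Psi_\theta(x)\le J(x)-\log\E[N]$, which matches (i) and (ii). For the matching lower bound in case (i), I take the tilt level $y=\phi'(\theta)$: an existence estimate for a particle with $S(v)/n\approx y$ (via Biggins' counting theorem when $\theta<\theta_0$ and via the BRW upper-tail LDP when $\theta\ge\theta_0$), combined with the independence of $\{Z_v\}$ from the tree and the tail $\P(Z>n(x-y))\sim\mu_1 e^{-\theta n(x-y)}$, telescopes by Legendre duality to $e^{-n(\theta x-\phi(\theta)-\log\E[N])}$. For case (ii) I reduce to the BRW LDP through the inequality $R_n^\ast\ge R_n+Z_{v^\ast}$ (with $v^\ast$ achieving $R_n$) together with $\P(Z>0)>0$; since $x\in(c(\theta_0),\phi'(\theta)]$ lies strictly above the speed of BRW, the rate is $I(x)-\log\E[N]$.

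\textbf{Lower tail.} For (v) the lower bound comes from the spine strategy: conditioning on $\{N_1=\cdots=N_n=1\}$ (probability $e^{-n\rho}$) collapses $R_n^\ast$ to $\mathsf{S}_n+Z$, and Cram\'er gives $\P(\mathsf{S}_n+Z<nx)\sim e^{-nI(x)}$. For (iv) I interpolate: a spine of length $sn$ ending at $\mathsf{S}_{sn}/(sn)\approx y$, followed by a freely evolving LPM-BRW of depth $(1-s)n$, concentrates around $sny+(1-s)nc(\theta)$, which stays below $nx$ once $s(y-c(\theta))\le x-c(\theta)$. Minimising $s(\rho+I(y))$ under this constraint is a Lagrange problem whose stationarity equation is precisely that defining $a_\theta^\rho$, yielding the linear rate $I'(a_\theta^\rho)(x-c(\theta))$. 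The matching upper bounds follow from an inductive decomposition based on the first generation $\tau$ at which the tree branches (i.e.\ $N_\tau\ge 2$): the cost of this event is $\P(N=1)^{\tau-1}=e^{-(\tau-1)\rho}$, after which each of the $N_\tau$ independent depth-$(n-\tau)$ subtrees must itself have its right-most LPM-particle below $nx-\mathsf{S}_\tau$; a Laplace transform over $\tau/n$ reproduces the same Legendre optimisation. Case (vi) is immediate: $\rho=\infty$ together with (\hyperlink{A2}{$\mathbf{A2}$}) forces $N_n\ge 2^n$ a.s., and splitting at generation $n/2$ yields at least $2^{n/2}$ conditionally independent subtree-maxima each of which lies below the threshold with probability at most $1-\epsilon$, giving $\Psi_\theta(x)=\infty$.

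The principal obstacle is the upper bound for cases (iv)--(v), where no single-step Chernoff captures the trade-off between spine-length and subtree-concentration; the inductive decomposition has to be calibrated so that the $O(1)$ contribution of $Z_v$ (controlled by (\hyperlink{A3}{$\mathbf{A3}$})) is absorbed into the freely-branching step without perturbing the Legendre optimisation. A secondary point is ensuring that the completed BRW LDP invoked in (ii) can be derived within the same framework without circularity.
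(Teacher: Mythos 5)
Your architecture is right and several pieces match the paper (many-to-one plus Chernoff for the upper bounds in (i)--(ii), the spine lower bound for (iv)--(v), (iii) from the a.s.\ limit). Your lower bounds for (i) and (ii) take a genuinely different route from the paper: for (i) the paper does not plant one particle at $n\phi'(\theta)$ and pay the $e^{-\theta n(x-\phi'(\theta))}$ tail of $\theta^{-1}\log(Y/E)$; instead it uses the distributional identity $\theta R_n^*(\theta,\eta)\disteq\log A_n^{\eta}(\theta)-\log E$ together with the monotonicity $\big(A_n^{\mu}(\theta)\big)^{1/\theta}\geq\big(A_n^{\mu_1}(\theta_1)\big)^{1/\theta_1}$, and for (ii) it runs a self-contained bootstrap ($\liminf\geq\alpha(\log\E[N]-I(x))+(1-\alpha)\liminf$, closed by the finiteness supplied by Lemma~\ref{ld1}) rather than importing the Gantert--H\"ofelsauer upper-tail LDP for $R_n$. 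Your versions are plausible, and citing Gantert--H\"ofelsauer for (ii) is not circular since the paper itself attributes that part of Theorem~\ref{ratefnrn} to them.

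There are, however, two genuine gaps. First, your upper bound for (iv)--(v): decomposing at the \emph{first branching time} leaves each of the (possibly only two) subtrees having to realise a lower-deviation event of exactly the type being estimated, so your ``Laplace transform over $\tau/n$'' is really an unresolved recursion $\Lambda(x)\geq\inf_{s,y}\{s\rho+sI(y)+N(1-s)\Lambda(\cdot)\}$ whose solution you would have to identify and control uniformly by induction. The paper sidesteps this entirely by conditioning on the first grid time $n_i$ at which $N_{n_i}\geq n^2$ (using Theorem~2.5 of \cite{GH18}, $\P(N_m<n^2)=e^{-m\rho+o(n)}$): with $n^2$ independent subtrees, the event that \emph{all} subtree maxima sit $\epsilon$ below their typical value is superexponentially small, so the only surviving scenario is that the spine particle $S(\tilde v_{n_{i+1}})$ is low, which is a plain Cram\'er estimate. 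Second, your proof of (vi) fails as stated: splitting at generation $n/2$, the thresholds $nx-S(v)$ seen by the subtrees depend on the root positions, and the event that these roots are atypically low (of order $-n\epsilon$) has probability only $e^{-\Theta(n)}$, on which all subtree maxima clear the (now generous) thresholds with probability close to one; your bound therefore yields a finite rate, not $\infty$. The fix is the paper's: split at $m=\lfloor\sqrt n\rfloor$ (any $\log n\ll m\ll n$ works) and use the decomposition~\eqref{stineq} with $\tilde v_m=\arg\max_v R^{*(v)}_{n-m}$, so that $S(\tilde v_m)\disteq\mathsf{S}_m$ is independent of the subtree maxima; then $\P(\mathsf{S}_m<-n\epsilon)$ is superexponentially small because $m=o(n)$, while $(1-\delta)^{2^m}$ is superexponentially small because $2^m\gg n$.
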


While proving our main result, we also observe that we can complete the LDP for $\{R_n/n\}_{n\geq1}$, which was proved by Gantert and  H\"{o}felsauer~\cite{GH18} but only partially.

\begin{theorem}
	\label{ratefnrn}
	The laws of $\{  {R_n /n} \}_{n\geq1}$ satisfy the large deviation principle with the rate function 
	\[
\Phi (x):=\left\{
\begin{array}{lll}
	 I(x)-\log\E[N],&\text{if } x>c(\theta_0);&\text{{(i)}}\\[.15cm]
	 0,&\text{if }x=c(\theta_0);&\text{{(ii)}}\\[.15cm]
	 I'\big(a_{\theta_0}^{\rho}\big)\left(x-c(\theta_0)\right),
	&\text{if $a_{\theta_0}^{\rho}\leq x< c(\theta_0)$ and $\rho<\infty$};&\text{{(iii)}}\\[.15cm]
	 I(x)+\rho, &\text{if $x< a_{\theta_0}^{\rho}$ and $\rho<\infty$};&\text{{(iv)}}\\[.15cm]
	 \infty,& \text{if $x<c(\theta_0)$ and $\rho=\infty$}.&\text{{(v)}}
\end{array}
\right.
\]
\end{theorem}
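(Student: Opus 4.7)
The plan is to derive Theorem~\ref{ratefnrn} from Theorem~\ref{ratefn} via a coupling argument, exploiting that $R_n^*(\theta,\mu)$ approaches $R_n$ as $\theta\to\infty$. The starting point is the Lipschitz-type inequality
\[
\big|R_n - R_n^*(\theta,\mu)\big| \;\leq\; \frac{1}{\theta}\,\max_{|v|=n}\big|\log(Y_v/E_v)\big|,
\]
coming from the elementary bound $\max_v a_v \leq \max_v(a_v+b_v)+\max_v(-b_v)$ applied with $a_v=S(v)$, $b_v=\theta^{-1}\log(Y_v/E_v)$. Under assumption (\hyperlink{A3}{$\mathbf{A3}$}), $\log Y_v$ has finite moment-generating function on all of $\R$, and the $\mathrm{Exponential}(1)$ law of $E_v$ gives $\E[e^{\lambda|\log E_v|}]<\infty$ for every $\lambda\in(0,1)$. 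Hence $|\log(Y_v/E_v)|$ has exponential tail $\P(|\log(Y_v/E_v)|>t)\leq C e^{-at}$ for some $a\in(0,1)$, and the many-to-one formula yields
\[
\P\!\left(\max_{|v|=n}|\log(Y_v/E_v)|>n\theta\varepsilon\right) \;\leq\; (\E[N])^n\,\P\big(|\log(Y_v/E_v)|>n\theta\varepsilon\big) \;\leq\; C\,e^{\,n(\log\E[N]-a\theta\varepsilon)},
\]
which is $\leq e^{-nM}$ for any preassigned $M>0$ once $\theta$ is taken large enough (depending on $M$ and $\varepsilon$).

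With this control in hand, the LDP bounds for $R_n$ follow from those for $R_n^*$. For the upper bound at $x>c(\theta_0)$, one splits
\[
\P(R_n\geq nx) \;\leq\; \P\big(R_n^*(\theta,\mu)\geq n(x-\varepsilon)\big)\;+\;\P\!\left(\max_{|v|=n}\tfrac{1}{\theta}|\log(Y_v/E_v)|>n\varepsilon\right),
\]
and invokes Theorem~\ref{ratefn} on the first term; the mirror inequality (with $\varepsilon$ flipped) handles the lower-deviation regime $x<c(\theta_0)$. The matching lower bounds come from $\P(R_n\geq nx)\geq \P(R_n^*\geq n(x+\varepsilon))-\P(\max_v \tfrac{1}{\theta}|\log(Y_v/E_v)|>n\varepsilon)$ and its analogue, together with the observation that the subtracted error is exponentially smaller than the main term once $\theta$ is large. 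The boundary case $x=c(\theta_0)$ is immediate from \eqref{conv:rn}.

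One then verifies that $\Psi_\theta(x)\to\Phi(x)$ as $\theta\to\infty$. On $\{x<c(\theta_0)\}$, pieces (iv)--(vi) of $\Psi_\theta$ coincide identically with pieces (iii)--(v) of $\Phi$ for every $\theta\geq\theta_0$, because $c(\theta)=c(\theta_0)$ and therefore $a_\theta^\rho=a_{\theta_0}^\rho$ throughout this range. On $\{x>c(\theta_0)\}$, since $\phi'$ is increasing, for every such $x$ one may choose $\theta$ large enough to ensure $\phi'(\theta)\geq x$, which places $x$ in the regime of piece (ii) of $\Psi_\theta$ where $\Psi_\theta(x)=I(x)-\log\E[N]=\Phi(x)$; equivalently, the exceptional piece (i) of $\Psi_\theta$, active only for $x>d(\theta)=\max\{c(\theta_0),\phi'(\theta)\}$, recedes to infinity and disappears in the limit.

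The main obstacle is the careful bookkeeping of the two limits $\theta\to\infty$ and $\varepsilon\to 0$: one needs to choose $\theta=\theta(\varepsilon)$ so that the coupling error remains exponentially negligible compared to $\Psi_\theta(x\pm\varepsilon)$ uniformly across each $x$-region of interest, and then send $\varepsilon\to 0$ using continuity of $\Phi$ inside its effective domain. Once this coordination is in place, the complete rate function $\Phi$ of Theorem~\ref{ratefnrn} emerges from $\Psi_\theta$ of Theorem~\ref{ratefn}, filling in in particular the pieces (iv) and (v) that were missing from the partial result of Gantert and H\"{o}felsauer \cite{GH18}.
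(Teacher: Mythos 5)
Your argument is correct in substance but follows a genuinely different route from the paper. The paper disposes of Theorem~\ref{ratefnrn} in three lines: parts (i)--(iv) are quoted from Gantert and H\"{o}felsauer (with the simplification of (iii)--(iv) carried out in the computation~\eqref{gantertsimplify}), and part (v) is obtained by rerunning the proof of part \hyperref[proofvi]{(vi)} of Theorem~\ref{ratefn} verbatim with $R_n$ in place of $R_n^*$. You instead deduce the whole theorem from Theorem~\ref{ratefn} by the coupling $|R_n-R_n^*(\theta,\mu)|\leq \theta^{-1}\max_{|v|=n}|\log(Y_v/E_v)|$, whose error you correctly show is negligible at any prescribed exponential rate once $\theta$ is large (the union bound over $N_n$ with $\E[N]<\infty$ and the exponential tail of $\log(Y/E)$ are both valid under (\hyperlink{A2}{$\mathbf{A2}$})--(\hyperlink{A3}{$\mathbf{A3}$})); the identification $c(\theta)=c(\theta_0)$, $a_\theta^\rho=a_{\theta_0}^\rho$ for $\theta\geq\theta_0$ and the convergence $\Psi_\theta\to\Phi$ then finish the job. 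What your route buys is a self-contained derivation that makes the heuristic ``$R_n^*\to R_n$ as $\theta\to\infty$'' precise at the LDP level and avoids citing the earlier paper for (i)--(iv); what it costs is that every part of Theorem~\ref{ratefnrn}, including (v), now inherits the hypotheses needed for Theorem~\ref{ratefn} (in particular $\E[N^{1+p}]<\infty$, which the paper points out is not needed for part (v)).

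Two small points to tighten. First, your claim that for every $x>c(\theta_0)$ one can choose $\theta$ with $\phi'(\theta)\geq x$ fails when $X$ is bounded above and $x\geq x_{\max}:=\operatorname{ess\,sup}X$, since $\phi'(\theta)<x_{\max}$ for all $\theta$; there $x$ stays in piece (i) of $\Psi_\theta$ forever. This is harmless because $\theta x-\phi(\theta)-\log\E[N]\to I(x)-\log\E[N]$ as $\theta\to\infty$ for $x\geq x_{\max}$ (with value $-\log\P(X=x_{\max})-\log\E[N]$ at $x=x_{\max}$ and $+\infty$ beyond), but the limit should be taken through piece (i) rather than by escaping into piece (ii). Second, the passage $\varepsilon\downarrow 0$ requires continuity of the limiting rate at $x$, which can fail at the boundary of $\mathcal D_I$; restricting to interior points and invoking lower semicontinuity for the boundary, as is standard in Cram\'{e}r-type arguments, closes this.
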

\begin{remark}
	The parts (i), (ii), (iii), and (iv) of Theorem~\ref{ratefnrn} were proved by Gantert and  H\"{o}felsauer~\cite{GH18}, but part (v) was unsolved in their paper. As the anonymous referee  pointed out, this part was recently proved by Chen and He~\cite{CH20}. But at the time of writing this article, the author did not know this and therefore has given an alternative proof. Also, parts (iii) and (iv) of Theorem~\ref{ratefnrn} calculated by Gantert and  H\"{o}felsauer~\cite{GH18} have been simplified here.  Notice here that the rate function in Theorem~\ref{ratefnrn} is  similar to that of the \textit{Branching Brownian  Motion (BBM)}  calculated by Derrida and Shi~\cite{DS17}. 
	As a result, we also see similarities in the figures in Section~\ref{sec:example} and those in Derrida and Shi~\cite{DS17}. 
\end{remark}

\subsection{Outline}
The article is organized as follows. In Section~\ref{sec:example}, we illustrate our main results with a few examples. We give the proofs of our main results in Section~\ref{sec:proofs}. Finally, in Section~\ref{sec:comparision}, we compare the rate function for $\{  {R_n^* /n} \}_{n\geq1}$ with that of $\{  {R_n /n} \}_{n\geq1}$.

\section{Examples}
\label{sec:example}
As an illustration, in this section we consider two specific examples.
Our first example is when $N$ takes value $2$ with probability 
$1$, $X \sim \mbox{N}(0,1)$, $\theta=3$, and $\mu=\delta_1$. Then, as displayed in Figure~\ref{fig_ex1}, the large deviation rate function for the laws of
$\{  {R_n^* (3,\delta_1) /n} \}_{n\geq1}$ is
\[
f_1(x)=\left\{ \begin{array}{ll}
	3x-\frac{9}{2}-\log 2 ,& \text{ if } x\geq 3;\\[.15cm]
	\frac{x^2}{2} -\log 2 ,& \text{ if } \sqrt{2\log 2}\leq x\leq 3;\\[.15cm]
	\infty ,& \text{ if } x<\sqrt{2\log 2}.
\end{array} \right.
\]
\begin{figure}[ht]
\begin{minipage}{.49\textwidth}
	\centering
		\includegraphics[width=.9\textwidth]{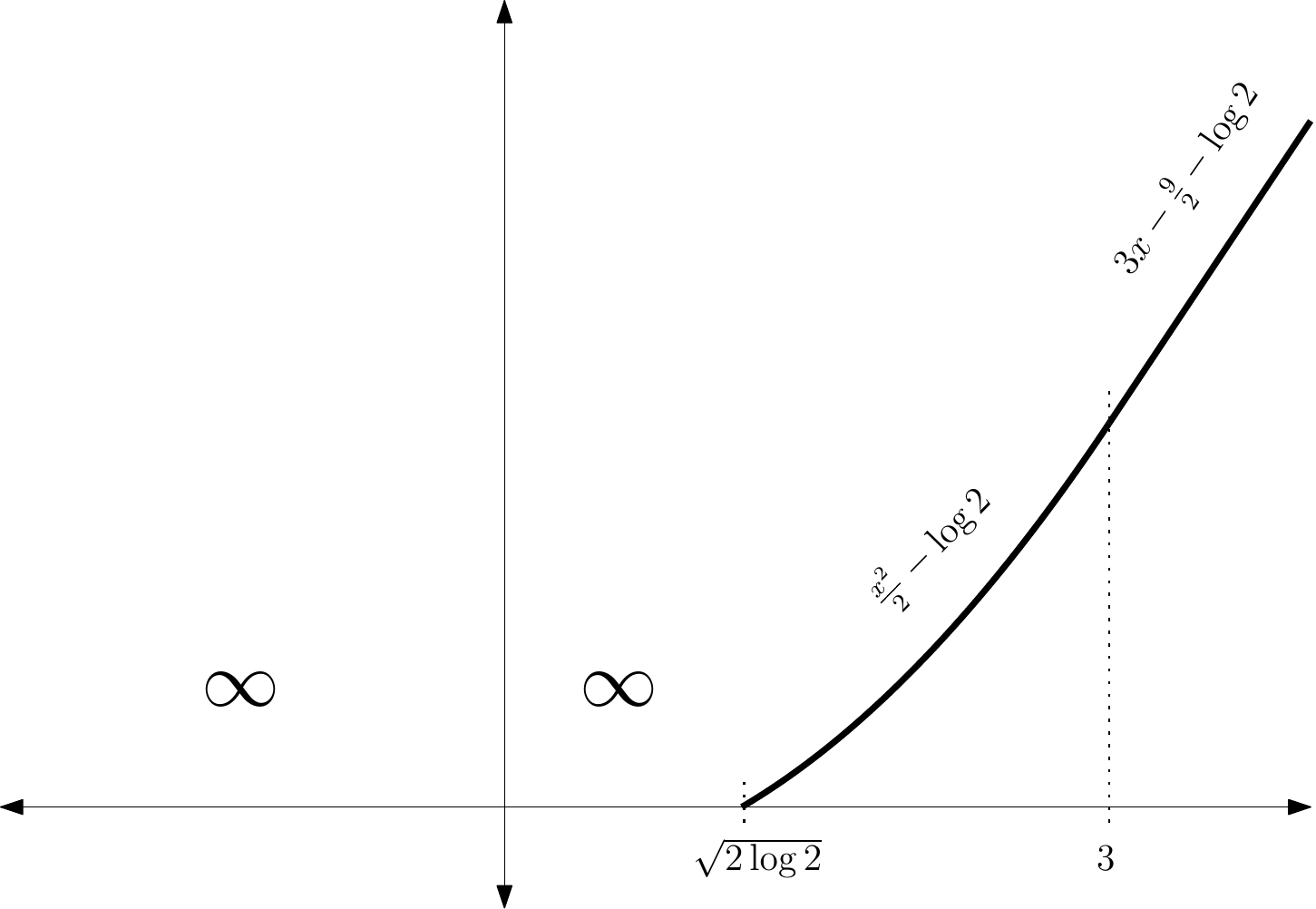}\\
		\caption{Graph of $f_1$}\label{fig_ex1}
	\end{minipage}
\begin{minipage}{.49\textwidth}
\centering
\includegraphics[width=.9\textwidth]{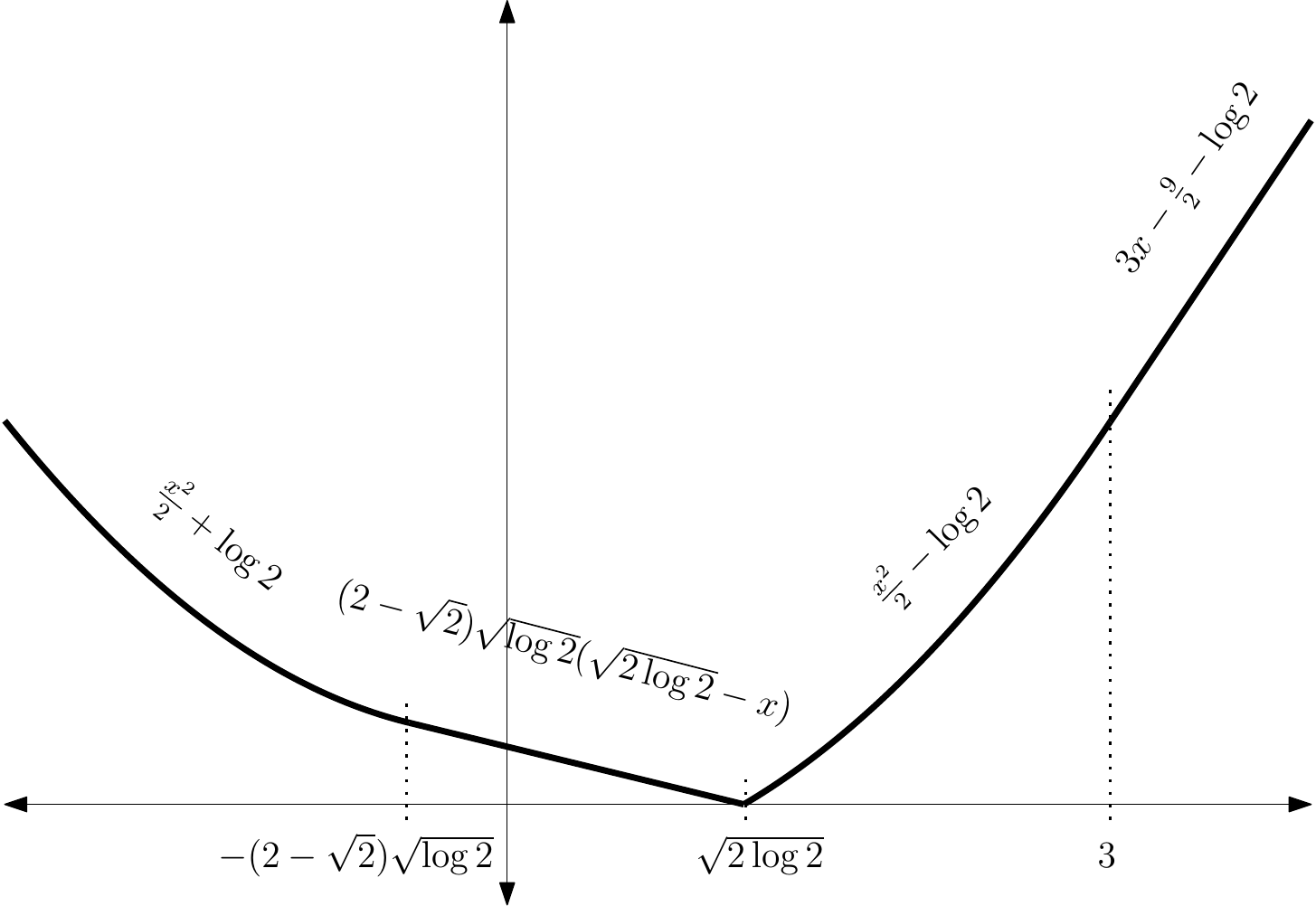}\\
\caption{Graph of $f_2$}\label{fig_ex2}
\end{minipage}
\end{figure}
On the other hand, if $N$ takes the value $1$ with probability $1/2$ and 
$3$ with probability $1/2$, and $X$, $\theta$, and $\mu$ are as in the previous example, then, as demonstrated in Figure~\ref{fig_ex2}, the large deviation rate function for the laws of
$\{  {R_n^* (3,\delta_1) /n} \}_{n\geq1}$ is
\[
f_2(x)=\left\{ \begin{array}{ll}
	3x-\frac{9}{2}-\log 2 ,& \text{ if } x\geq 3;\\[.15cm]
	\frac{x^2}{2} -\log 2 ,& \text{ if } \sqrt{2\log 2}\leq x\leq 3;\\[.15cm]
	\big(2-\sqrt{2}\big)\sqrt{\log 2}\big(\sqrt{2\log 2}-x\big) ,& \text{ if } -\big(2-\sqrt{2}\big)\sqrt{\log 2}\leq x\leq\sqrt{2\log 2};\\[.15cm]
	\frac{x^2}{2} +\log 2 ,& \text{ if } x\leq-\big(2-\sqrt{2}\big)\sqrt{\log 2}.
\end{array} \right.
\]
\section{Proofs of the Main Results}
\label{sec:proofs}

\subsection{Proof of Theorem~\ref{ratefn}}
The general strategy to prove this theorem is to give an upper bound and a lower bound on the rate function. In most parts of the proof we will see that one bound is straightforward  and for the other bound we decompose our LPM-BRW at an intermediate generation. In fact, the strategy for proving (iv) and (v) is  similar to that used in Gantert and  H\"{o}felsauer~\cite{GH18}. To explain the decomposition more formally, for $v$ such that $|v|=m\leq n$, we define
  \begin{equation}
  	R_{n-m}^{*(v)}:=\max_{|u|=n,v<u}\Big\{S(u)+\frac{1}{\theta}\log (Y_u/E_u)\Big\}-S(v).
  	\label{proofprelim:eq1}
  \end{equation}
Here $v<u$ means $u$ is a descendant of $v$. Note that $\{R_{n-m}^{*(v)}\}_{|v|=m}$ are i.i.d. copies of $R_{n-m}^*$ and are independent of the BRW up to generation $m$. Now,~\eqref{def:rnrn*} implies that
\begin{align}
	R_{n}^* = \max_{|v|=m}\Big\{\max_{ |u|=n,v<u }\Big\{S(u)+\frac{1}{\theta}\log (Y_u/E_u)\Big\}\Big\}
	&=\max_{|v|=m}\Big\{S(v)+R_{n-m}^{*(v)}\Big\}\nonumber\\%[.25cm]
	&\geq S({\tilde{v}_m})+\max_{|v|=m}R_{n-m}^{*(v)}, \label{stineq}
\end{align}	
where $\tilde{v}_m:=\arg\max_{|v|=m} R_{n-m}^{*(v)}$.	
Since $\{S(v)\}_{|v|=m}$ are identically distributed and are independent of $\{R_{n-m}^{*(v)}\}_{|v|=m}$, we have
\begin{equation}
	S({\tilde{v}_m}) \xlongequal{d} \mathsf{S}_{m}.
	\label{disteq:S}
\end{equation}

To prove Theorem~\ref{ratefn}, we also need the following lemma, which provides LDP
for each of the branches of the LPM-BRW. %Proof of this is given in Section 5.

\begin{lemma}
	\label{ld1}
	Let $Y\sim\mu$ and $E\sim\mbox{Exponential}\left(1\right)$ be independent of each other and also independent of the random variables $\{X_n\}_{n\geq1}$. Then, for any $\theta>0$, the laws of $\big\{\frac{\mathsf{S}_n}{n}+\frac{1}{n\theta}\log (Y/E)\big\}_{n\geq1}$ satisfy the large deviation principle with the rate function
	\[I_{\theta}(x):=
	\left\{
	\begin{array}{ll}
		I(x) ,&\text{if $x\leq\phi'(\theta)$;}\\
		\theta x-\phi(\theta),&\text{if $x\geq\phi'(\theta)$.}
	\end{array}
	\right.
	\]
\end{lemma}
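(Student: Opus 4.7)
The plan is to combine Cram\'er's theorem for $\mathsf{S}_n/n$ with sharp tail asymptotics for the perturbation $U_n := \frac{1}{n\theta}\log(Y/E)$, using independence of $\mathsf{S}_n$, $Y$, and $E$. By (\hyperlink{A3}{$\mathbf{A3}$}), $Y$ has moments of every real order, and $E \sim \mbox{Exponential}(1)$ has $\Gamma$-type moments and Gumbel-type tails. The key asymmetry is that $\P(E < e^{-n\theta y}) \sim e^{-n\theta y}$ for $y > 0$ produces upward exponential-scale deviations of $U_n$ at rate exactly $\theta$, whereas downward deviations of $U_n$ are super-exponentially rare because both $Y$ and $1/E$ have all moments. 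This is the source of the kink in $I_\theta$ at $\phi'(\theta)$.

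For the upper bound, exponential Chebyshev with $\lambda \in [0,\theta)$ gives
\[
\P\Bigl(\frac{\mathsf{S}_n}{n} + \tfrac{1}{n\theta}\log(Y/E) \geq x\Bigr) \leq e^{-n(\lambda x - \phi(\lambda))}\, \E[Y^{\lambda/\theta}]\, \Gamma(1 - \lambda/\theta),
\]
since $\E\bigl[e^{(\lambda/\theta)\log(Y/E)}\bigr] = \E[Y^{\lambda/\theta}]\,\Gamma(1-\lambda/\theta) < \infty$ by (\hyperlink{A3}{$\mathbf{A3}$}) and $\lambda < \theta$. Taking $\limsup \frac{1}{n}\log$ and optimizing yields $\sup\{\lambda x - \phi(\lambda) : \lambda \in [0,\theta)\}$, which by strict convexity of $\phi$ equals $I(x)$ when $x \leq \phi'(\theta)$ (the unconstrained maximizer $(\phi')^{-1}(x)$ lies in $[0,\theta]$) and equals $\theta x - \phi(\theta)$ when $x \geq \phi'(\theta)$ (the maximizer saturates at the boundary). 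The lower tail ($x < \E[X]$) is handled analogously with $\lambda < 0$; since all negative moments of $Y$ and positive moments of $E$ are available, one recovers the Cram\'er rate $I(x)$.

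For the lower bound I decompose according to which variable carries the deviation. When $x \leq \phi'(\theta)$, independence together with $U_n \to 0$ almost surely gives
\[
\P\Bigl(\Bigl|\tfrac{\mathsf{S}_n}{n} + U_n - x\Bigr| < 2\epsilon\Bigr) \geq \P\Bigl(\Bigl|\tfrac{\mathsf{S}_n}{n} - x\Bigr| < \epsilon\Bigr)\, \P(|U_n| < \epsilon),
\]
which decays at rate $I(x) + o_\epsilon(1)$ by Cram\'er. When $x > \phi'(\theta)$, set $y := x - \phi'(\theta) > 0$ and use
\[
\P\Bigl(\tfrac{\mathsf{S}_n}{n} + U_n > x - \epsilon\Bigr) \geq \P\Bigl(\Bigl|\tfrac{\mathsf{S}_n}{n} - \phi'(\theta)\Bigr| < \tfrac{\epsilon}{2}\Bigr)\, \P\bigl(U_n > y + \tfrac{\epsilon}{2}\bigr),
\]
where a direct computation from the density of $E$ yields $\frac{1}{n}\log\P(U_n > z) \to -\theta z$ for $z > 0$. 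Adding the rates $I(\phi'(\theta))$ and $\theta(y + \epsilon/2)$ and letting $\epsilon \downarrow 0$ gives the tangent-line value $I(\phi'(\theta)) + \theta y = \theta x - \phi(\theta)$, as required.

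The main obstacle is matching the two tail rates of $U_n$ so that the Chebyshev upper bound and the direct lower bound meet exactly at the value $\theta$; this reduces to the Gumbel-type asymptotics of $-\log E$ together with the fact that $Y^{\pm s}$ is integrable for every real $s$ by (\hyperlink{A3}{$\mathbf{A3}$}). The identification of the suprema with the piecewise formula is then a short convex-analysis computation using $(\phi')^{-1} = I'$ on the interior of $\mathcal{D}_I$.
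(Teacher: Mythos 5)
Your proposal is correct and follows essentially the same route as the paper: the Chernoff bound over $\lambda<\theta$ (the paper packages this as the G\"{a}rtner--Ellis upper bound for the limiting log-MGF $\Upsilon_\theta(\lambda)=\phi(\lambda)\mathbf{1}_{\{\lambda<\theta\}}+\infty\cdot\mathbf{1}_{\{\lambda\ge\theta\}}$) gives $I_\theta$ as the upper rate, and the lower bound for $x>\phi'(\theta)$ is obtained exactly as in the paper's inequality \eqref{itheta3}, by forcing $\mathsf{S}_n\approx n\phi'(\theta)$ and exploiting $\P(E<\alpha e^{-n\theta(x-\phi'(\theta))})$ together with $\P(Y>\alpha)>0$. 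The only cosmetic difference is that you handle the exposed-point regime $x\le\phi'(\theta)$ by Cram\'{e}r plus $U_n\to0$ rather than by the G\"{a}rtner--Ellis lower bound, which changes nothing of substance.
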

\begin{proof}%[\textbf{Proof of Lemma~\ref{ld1}}]
	For each $\theta>0$ and $\lambda\in \R$, we define 
	\begin{align*}
		\Upsilon_{\theta}(\lambda):&=\lim_{n\rightarrow\infty}\frac{1}{n}\log\E\big[e^{\lambda\mathsf{S}_n+\frac{\lambda}{\theta}\log (Y/E)}\big]\\
		&=\lim_{n\rightarrow\infty}\frac{1}{n}\log\Big(e^{n\phi(\lambda)}\cdot\E\big[Y^{\lambda/\theta}\big]\cdot\E\big[E^{-\lambda/\theta}\big]\Big)=\left\{
		\begin{array}{ll}
			\phi(\lambda),&\text{if $\lambda<\theta$;}\\
			\infty,&\text{if $\lambda\geq\theta$.}
		\end{array}
		\right.
	\end{align*}
	Its Fenchel-Legendre transform is
	\begin{align*}
		\Upsilon_{\theta}^*(x):=\sup_{\lambda\in\R} \left\{\lambda x-\Upsilon_{\theta}(\lambda)\right\}
		=\sup_{\lambda<\theta} \left\{\lambda x-\phi(\lambda)\right\} =I_{\theta}(x).
	\end{align*} 
	Since $0$ belongs to the interior of the set $\left\{\lambda\in\R:\Upsilon_{\theta}(\lambda)<\infty \right\}$, it follows
	from the G\"{a}rtner-Ellis theorem (see Dembo and Zeitouni~\cite{DZ10}) that for any closed set $F$,
	\begin{equation}
		\limsup_{n\rightarrow\infty}\frac{1}{n}\log\P\Big(\frac{\mathsf{S}_n}{n}+\frac{1}{n\theta}\log (Y/E)\in F\Big)
		\leq -\inf_{x\in F} I_{\theta}(x), \label{itheta1}
	\end{equation}
	and for any open set $G$,
	\begin{equation}
		\liminf_{n\rightarrow\infty}\frac{1}{n}\log\P\Big(\frac{\mathsf{S}_n}{n}+\frac{1}{n\theta}\log (Y/E)\in G\Big)
		\geq -\inf_{x\in G,\, x<\phi'(\theta)} I_{\theta}(x). \label{itheta2}
	\end{equation}
	Note that since $Y$ is a positive random variable, there exists $\alpha>0$ such that $\P(Y>\alpha)>0$.
	Now, for any $x\geq\phi'(\theta)$, we have
	\begin{align*}
		\P\Big( \frac{\mathsf{S}_n}{n}+\frac{1}{n\theta}\log (Y/E)> x \Big)
		\geq \P\big( \mathsf{S}_n> n\phi'(\theta) \big) \cdot\P(Y>\alpha)\cdot\P\big( E<\alpha e^{-n\theta\left(x-\phi'(\theta)\right)} \big).
	\end{align*}
	Therefore using  Cram\'{e}r's theorem, we get
	\begin{align}
		\liminf_{n\rightarrow\infty} \frac{1}{n}\log \P\Big( \frac{\mathsf{S}_n}{n}+\frac{1}{n\theta}\log (Y/E)> x \Big)
		\geq  -I\left( \phi'(\theta)\right)-\theta\left(x-\phi'(\theta)\right)=-I_{\theta}(x). \label{itheta3}
	\end{align}
	Combining~\eqref{itheta2} and~\eqref{itheta3}, we obtain that for any open set $G$,
	\begin{equation*}
		\liminf_{n\rightarrow\infty}\frac{1}{n}\log\P\Big(\frac{\mathsf{S}_n}{n}+\frac{1}{n\theta}\log (Y/E)\in G\Big)
		\geq -\inf_{x\in G} I_{\theta}(x). 
	\end{equation*}
	This, together with~\eqref{itheta1}, completes the proof.
\end{proof}
\vspace{\baselineskip}

  Now we have all the machinery to prove Theorem~\ref{ratefn}. 

\subsubsection{Proof of (vi)}\label{proofvi}
\begin{proof}
	Take any $x<c(\theta)$ and $\epsilon\in\left(0,c(\theta)-x\right)$. Using inequality~\eqref{stineq}, we have
	\begin{align}
		\P(R_n^*<nx)
		&\leq \P\Big(S({\tilde{v}_{\lfloor\sqrt{n}\rfloor}})+\max_{|v|=\lfloor\sqrt{n}\rfloor}R_{n-\lfloor\sqrt{n}\rfloor}^{*(v)}<nx\Big)\nonumber \\[.15cm]
		&\leq
		\P\Big(\max_{|v|=\lfloor\sqrt{n}\rfloor}R_{n-\lfloor\sqrt{n}\rfloor}^{*(v)}<n(x+\epsilon)\Big)
		+ \P\big(S({\tilde{v}_{\lfloor\sqrt{n}\rfloor}})<-n\epsilon\big)\nonumber\\[.15cm]
		&\leq
		\E\Big[\P\Big(R_{n-\lfloor\sqrt{n}\rfloor}^*<n(x+\epsilon)\Big)^{N_{\lfloor\sqrt{n}\rfloor}}\Big]+
		\P\big(\mathsf{S}_{\lfloor\sqrt{n}\rfloor}<-n\epsilon\big). \label{proof6:eq1}
	\end{align}
	Here $\lfloor x\rfloor$ denotes the greatest integer less than or equal to $x$, and $N_k$ represents the total number of particles at generation $k$. Note that $N_k$ is at least $2^k$ since $\P(N=1)=0$. Now since $x+\epsilon<c(\theta)$, which is the almost sure limit of $R_{n-\lfloor\sqrt{n}\rfloor}^*/n$, we have
	\begin{align}
		&\limsup_{n\rightarrow\infty}
		\frac{1}{n}\log  \E\Big[\P\Big(R_{n-\lfloor\sqrt{n}\rfloor}^*<n(x+\epsilon)\Big)^{N_{\lfloor\sqrt{n}\rfloor}}\Big]\nonumber \\[.15cm]
		\leq\,\,&\lim_{n\rightarrow\infty}
				\frac{2^{\lfloor\sqrt{n}\rfloor}}{n}\log\P\big(R_{n-\lfloor\sqrt{n}\rfloor}^*<n(x+\epsilon)\big)
				=-\infty.
		\label{proof6:eq2}
	\end{align}
	Let $\{t_n\}_{n\geq1}$ be a non-negative real sequence increasing to $\infty$ such that $\phi(-t_n)\leq \log n$. Such a sequence exists since $\phi(\lambda)<\infty$ for all $\lambda\leq0$. Then using  Markov's inequality we obtain
	\begin{align}
			\limsup_{n\rightarrow\infty}\frac{1}{n}\log\P\big(\mathsf{S}_{\lfloor\sqrt{n}\rfloor}<-n\epsilon\big)
			&\leq \lim_{n\rightarrow\infty}  \frac{1}{n}\log \big(e^{-nt_n\epsilon}\cdot \E\big[e^{-t_n\mathsf{S}_{\lfloor\sqrt{n}\rfloor}}\big]\big)\nonumber\\[.15cm]
			&= \lim_{n\rightarrow\infty} -t_n\epsilon + \frac{\lfloor\sqrt{n}\rfloor\phi(-t_n)}{n}
			=-\infty.
		\label{proof6:eq3}
	\end{align}
	%Now, we have
	%\begin{align*}
	%	\P\big(\mathsf{S}_{\lfloor\sqrt{n}\rfloor}<-n\epsilon\big)&\leq e^{-nt_n\epsilon}\cdot\E\big[e^{-t_n\mathsf{S}_{\lfloor\sqrt{n}\rfloor}}\big]\\[.15cm]
	%	&= e^{-nt_n\epsilon}\cdot\Big(\E\big[e^{-t_nX}\big]\Big)^{\lfloor\sqrt{n}\rfloor}
	%	= e^{\lfloor\sqrt{n}\rfloor\phi(-t_n) -nt_n\epsilon}
	%	\leq n^{\lfloor\sqrt{n}\rfloor}\cdot e^{-nt_n\epsilon}.
	%\end{align*}
	%Since $\P(N=1)=0$, we also have $N_{\lfloor\sqrt{n}\rfloor}\geq 2^{\lfloor\sqrt{n}\rfloor}$ almost surely. Thus equation~\eqref{proof6:eq1} implies
	%\begin{align}
	%	\P(R_n^*<nx)
	%	\leq
	%	\P\big(R_{n-\lfloor\sqrt{n}\rfloor}^*<n(x+\epsilon)\big)^{2^{\lfloor\sqrt{n}\rfloor}}+
	%	n^{\lfloor\sqrt{n}\rfloor}\cdot e^{-nt_n\epsilon}. \label{proof6:eq2}
	%\end{align}
%	Note that since $x+\epsilon<c(\theta)$, which is the almost sure limit of $R_{n-\lfloor\sqrt{n}\rfloor}^*/n$, 
%	\begin{align*}
%	\lim_{n\rightarrow\infty}
%		\frac{2^{\lfloor\sqrt{n}\rfloor}}{n}\log\P\big(R_{n-\lfloor\sqrt{n}\rfloor}^*<n(x+\epsilon)\big)
%		=-\infty.
%	\end{align*}
%	 Also,
%	\begin{align*}
%		\lim_{n\rightarrow\infty}
%		\frac{1}{n}\log\big(n^{\lfloor\sqrt{n}\rfloor}\cdot e^{-nt_n\epsilon}\big)
%		=\lim_{n\rightarrow\infty}
%		-t_n\epsilon+\frac{\lfloor\sqrt{n}\rfloor\log n }{n}
%		=-\infty.
%	\end{align*}
	Therefore, by combining~\eqref{proof6:eq1},~\eqref{proof6:eq2}, and~\eqref{proof6:eq3}, we get that for $\rho=\infty$ and all $x<c(\theta)$,
	\begin{equation}
		\lim_{n\rightarrow\infty}
		-\frac{1}{n}\log \P(R_n^*<nx)=\infty.
		\label{proved6}
	\end{equation}
\end{proof}

\subsubsection{Proof of (iv) \& (v)}\label{proofivv}

\begin{proof}
	\textit{\textbf{(Lower bound).}}
	Take any $x<c(\theta)$ and $t\in(0,1]$. Observe that for $|v|=\lceil tn \rceil$ and $\epsilon>0$,
	\begin{align}
		\P(R_n^*<nx) &\geq \P\big( S(v) +R_{\lfloor(1-t)n\rfloor}^{*(v)}<nx, N_{\lceil tn \rceil}=1  \big) \nonumber\\
		&\geq \P\big(N_{\lceil tn \rceil}=1\big)\cdot\P\big(R_{\lfloor(1-t)n\rfloor}^*<n(1-t)(c(\theta)+\epsilon)\big)\nonumber\\
		&\qquad\cdot\P\big(\mathsf{S}_{\lceil tn \rceil}<nx-n(1-t)(c(\theta)+\epsilon)\big).
		\label{proof45:lb:eq1}
	\end{align}
	Here $\lceil x \rceil$ denotes the smallest integer greater than or equal to $x$. Also, note that $N_{\lceil tn \rceil}$, $S(v)$ and $R_{\lfloor(1-t)n\rfloor}^{*(v)}$ are independent of each other, which implied the last inequality.
	For the first term on the right-hand side, we have
	\begin{equation}
		\lim_{n\rightarrow\infty}\frac{1}{n}\log \P\big(N_{\lceil tn \rceil}=1\big)=\lim_{n\rightarrow\infty}\frac{1}{n}\log \P(N=1)^{\lceil tn \rceil}=-\rho t.\label{ivvlb1}
	\end{equation}
	For $t=1$, the second term equals $\P(Y<E)>0$,
	 and for $t\in(0,1)$,  $c(\theta)$ is the almost sure limit of $R_{\lfloor(1-t)n\rfloor}^*/\left(n(1-t)\right) $. Therefore for all $t\in(0,1]$, we have
	\begin{equation}
		\lim_{n\rightarrow\infty}\frac{1}{n}\log \P\big(R_{\lfloor(1-t)n\rfloor}^*<n(1-t)(c(\theta)+\epsilon)\big)=0.\label{ivvlb2}
	\end{equation}
	Finally, for the last term, using  Cram\'{e}r's theorem, we get
	\begin{align}
		\lim_{n\rightarrow\infty}\frac{1}{n}\log\P\big(\mathsf{S}_{\lceil tn \rceil}<nx-n(1-t)(c(\theta)+\epsilon)\big)
		= -tI\bigg(\frac{x-(1-t)(c(\theta)+\epsilon)}{t}\bigg), \label{ivvlb3}
	\end{align}
	whenever 
	\[
	0<t\leq f(x):=\min\Big\{1, \frac{c(\theta)-x}{c(\theta)-\E[X]}  \Big\}.
	\]
	So, by combining~\eqref{proof45:lb:eq1},~\eqref{ivvlb1},~\eqref{ivvlb2}, and~\eqref{ivvlb3}, and allowing $\epsilon\downarrow 0$, we obtain
	\[
	\liminf_{n\rightarrow\infty}\frac{1}{n}\log \P(R_n^*<nx)
	\geq -\inf\limits_{0<t\leq f(x)}\Big\{\rho t+tI\Big(\frac{x-(1-t)c(\theta)}{t}\Big)\Big\}.
	\]
	Since $ I\big( \big(x-(1-t)c(\theta)\big)/t \big) $ is non-decreasing for
	$t\geq  {\big(c(\theta)-x\big)}/{\big(c(\theta)-\E[X]\big)}$,
	the above inequality implies
	\begin{equation}
		\liminf_{n\rightarrow\infty}\frac{1}{n}\log \P(R_n^*<nx)
		\geq -\inf\limits_{0<t\leq1}\Big\{\rho t+tI\Big(\frac{x-(1-t)c(\theta)}{t}\Big)\Big\}.
		\label{ng1}
	\end{equation}
	%\vspace{\baselineskip}
	
	\textit{\textbf{(Upper bound).}}
	Now, we fix any $k\in\N$ and define 
	$n_i=\left\lfloor{nif(x)}/{k}\right\rfloor$
	for all $i=0,1,2,\ldots,k$. Since $N_{{n_0}}=N_0=1$, for any $n\geq2$, we have
	\begin{align}
		\P(R_n^*<nx)
		&=\sum_{i=0}^{k-2}\P(N_{n_i}<n^2,N_{n_{i+1}}\geq n^2) \cdot\P(\left.R_n^*<nx\right| N_{n_i}<n^2,N_{n_{i+1}}\geq n^2)\nonumber\\%[.25cm]
		&\qquad\qquad+\P(N_{{n_{k-1}}}<n^2)\cdot\P( \left.R_n^*<nx\right| N_{{n_{k-1}}}<n^2). \label{ivvub0}
	\end{align}
	Using Theorem~2.5 of Gantert and  H\"{o}felsauer~\cite{GH18}, we get that for $1\leq i\leq k-1$,
	\begin{align}
		\lim_{n\rightarrow\infty}\frac{1}{n}\log\P(N_{{n_i}}<n^2  ) =-\frac{if(x)\rho}{k}.\label{ivvub1}
	\end{align}
%	Note that for $1\leq i\leq k$ and large enough $n$, $N_{{n_i}}<n^2$ implies that at least ${n_i}-\lceil 2\log_2n\rceil$ many particles have given birth to a single offspring. Therefore
%	\begin{align}
%		\limsup_{n\rightarrow\infty}\frac{1}{n}\log\P(N_{{n_i}}<n^2  )
%		&\leq \lim_{n\rightarrow\infty}\frac{1}{n}\log\big(\P(N=1)\big)^{{n_i}-\lceil 2\log_2n\rceil}\nonumber\\
%		&=\lim_{n\rightarrow\infty}-\frac{\big({n_i}-\lceil 2\log_2n\rceil\big)\rho}{n}=-\frac{if(x)\rho}{k}.\label{ivvub1}
%	\end{align}
	On the other hand, using inequality~\eqref{stineq}, we have for all $\epsilon>0$ and $0\leq i\leq k-2$,
	\begin{align}
		&\P\big( \left.R_n^*<nx \right| N_{n_i}<n^2 ,N_{n_{i+1}}\geq n^2 \big)\nonumber\\[.15cm]
		\leq\,\, & \P\big( \left.S({\tilde{v}_{n_{i+1}}})<nx-(n-{n_{i+1}})(c(\theta)-\epsilon)\right| N_{{n_i}}<n^2,N_{n_{i+1}}\geq n^2\big)\nonumber\\[.15cm]
		&\qquad+\P\Big(\Big.\max_{|v|={n_{i+1}}}R^{*(v)}_{n-{n_{i+1}}}<(n-{n_{i+1}})(c(\theta)-\epsilon)\Big| N_{{n_i}}<n^2,N_{n_{i+1}}\geq n^2\Big)\nonumber\\[.15cm]
		\leq\,\,  & \P\big(\mathsf{S}_{n_{i+1}}<nx-(n-{n_{i+1}})(c(\theta)-\epsilon)\big)+ \P\big(R^*_{n-{n_{i+1}}}<(n-{n_{i+1}})(c(\theta)-\epsilon)\big)^{n^2}.\label{ivvub2}
	\end{align}
	Notice that $S({\tilde{v}_{n_{i+1}}})$ is independent of $N_{n_{i}}$ and $N_{n_{i+1}}$, and by~\eqref{disteq:S}, it has the same distribution as $\mathsf{S}_{n_{i+1}}$,	which implied the last inequality.  
	Now, % for $i=k-1$ and $f(x)=1$, $n-n_{i+1}=0$. So, in that case, the second term on the right-hand side of~\eqref{ivvub2} becomes $\P(Y<E)^{n^2}$, where $\P(Y<E)<1$.  And for $i<k-1$ or $f(x)<1$, 
	we know that $c(\theta)$ is the almost sure limit of $R^*_{n-{n_{i+1}}}/ (n-{n_{i+1}})$. Therefore  for any $i=0,1,2,\ldots, k-2$,
	\[\lim_{n\rightarrow\infty}\frac{1}{n}\log\P\big(R^*_{n-{n_{i+1}}}<(n-{n_{i+1}})(c(\theta)-\epsilon)\big)^{n^2}= -\infty. \]
	Thus, from~\eqref{ivvub2}, we get that  for any $i=0,1,2,\ldots, k-2$ and $\epsilon>0$ small enough,
	\begin{align}
		&\limsup_{n\rightarrow\infty}\frac{1}{n}\log\P\big( \big. R_n^*<nx\big| N_{{n_i}}<n^2,N_{{n_{i+1}}}\geq n^2\big)\nonumber\\[.15cm]
		\leq& 
		\lim_{n\rightarrow\infty}\frac{1}{n}\log\P\left(\mathsf{S}_{{n_{i+1}}}<nx-(n-{n_{i+1}})(c(\theta)-\epsilon)\right)\nonumber\\[.15cm]
		=&- \frac{(i+1)f(x)}{k}\cdot I\bigg(\frac{x-\big(1-\frac{(i+1)f(x)}{k}\big)\left(c(\theta)-\epsilon\right)}{\frac{(i+1)f(x)}{k}}\bigg).
		\label{ivvub3}
	\end{align}
	For the last term of~\eqref{ivvub0}, if $f(x)=\left({c(\theta)-x}\right)/\left({c(\theta)-\E[X]}\right)$, we trivially have
	\begin{align*}
		\limsup_{n\rightarrow\infty}\frac{1}{n}\log\P\big( \big. R_n^*<nx\big| N_{{n_{k-1}}}<n^2\big) 
		\leq 0 = -f(x)\cdot I\big(\E[X]\big) .
	\end{align*}
	and if  $f(x)=1$, we have $x\leq \E[X]$. In that case, from	Lemma~\ref{ld1}, we have
	\begin{align*}
		\limsup_{n\rightarrow\infty}\frac{1}{n}\log \P\big( \big. R_n^*<nx \big| N_{{n_{k-1}}}<n^2\big)
		\leq \limsup_{n\rightarrow\infty}\frac{1}{n}\log\P\Big(\mathsf{S}_n+\frac{1}{\theta}\log (Y/E)<nx\Big)
		=-I(x).
	\end{align*}
Combining the above two inequalities, we get
\begin{align}
	\limsup_{n\rightarrow\infty}\frac{1}{n}\log \P\big( \big. R_n^*<nx \big| N_{{n_{k-1}}}<n^2\big) \leq -f(x)\cdot I\Big(\frac{x-(1-f(x))c(\theta)}{f(x)}\Big).
	\label{ivvub4}
\end{align}
	Threfore, by combining~\eqref{ivvub0},~\eqref{ivvub1},~\eqref{ivvub3}, and~\eqref{ivvub4}, and then allowing $\epsilon\downarrow 0$ and $k\rightarrow\infty$,  we obtain 
	\begin{equation}
		\limsup_{n\rightarrow\infty}\frac{1}{n}\log \P(R_n^*<nx)
		\leq -\inf\limits_{0<t\leq1}\Big\{\rho t+tI\Big(\frac{x-(1-t)c(\theta)}{t}\Big)\Big\}.
		\label{ng2}
	\end{equation}
	%\vspace{\baselineskip}
	This, together with~\eqref{ng1}, implies that for any $x<c(\theta)$ and $\rho<\infty$,
	\begin{align}
		\lim_{n\rightarrow\infty}-\frac{1}{n}\log \P(R_n^*<nx)
		&=\inf\limits_{0<t\leq1}\Big\{\rho t+tI\Big(\frac{x-(1-t)c(\theta)}{t}\Big)\Big\}\nonumber\\[.15cm]
		&=\inf\limits_{y\leq x}\Big\{\left(\rho+I(y)\right)\frac{c(\theta)-x}{c(\theta)-y}  \Big\}\nonumber\\[.15cm]
		&=\left(c(\theta)-x\right)\Big(\inf\limits_{y\leq x}\Big\{ \frac{\rho+I(y)}{c(\theta)-y} \Big\}\Big)\nonumber\\[.15cm]
		&=\left\{
		\begin{array}{ll}
			I'\left(a_{\theta}^{\rho}\right)\left(x-c(\theta)\right), &\text{if $a_{\theta}^{\rho}\leq x< c(\theta);$}\\%[.25cm]
			I(x)+\rho, &\text{if $x< a_{\theta}^{\rho}.$}
		\end{array}
		\right.\label{gantertsimplify}
	\end{align}
\end{proof}

\subsubsection{Proof of (iii)}\label{proofiii}
\begin{proof}
	This part follows from~\eqref{conv:rn*}.
\end{proof}

\subsubsection{Proof of (ii)}\label{proofii}
\begin{proof}
	Note that $c(\theta)< d(\theta)$ means $d(\theta)=\phi'(\theta)=\nu'(\theta)$. Therefore, $c(\theta)< d(\theta)$ occurs iff $\theta_0<\infty$ and $\theta>\theta_0$. So this part is only relevant for this range of $\theta$. 
	
	\textit{\textbf{(Upper bound).}}
	Take any $x\in(c(\theta),d(\theta)]$ and observe that
	\begin{align*}
		\P(R_{n}^*>nx)&=\E\big[\P(R_{n}^*>nx|N_n) \big]\\[.15cm]
		&\leq \E\Big[N_n\cdot\P\Big(\mathsf{S}_n+\frac{1}{\theta}\log (Y/E)>nx\Big) \Big]\\[.15cm]
		&=\big(\E[N]\big)^n\cdot\P\Big(\mathsf{S}_n+\frac{1}{\theta}\log (Y/E)>nx\Big).
	\end{align*}
	Since $\phi'(\theta)\geq x>c(\theta)>\E[X]$, using Lemma~\ref{ld1},  we get
	\begin{equation}
		\limsup_{n\rightarrow\infty} \frac{1}{n}\log\P(R_{n}^*>nx)\leq \log\E[N]-I(x)\label{iiub}.
	\end{equation}
	%\vspace{\baselineskip}
	
	\textit{\textbf{(Lower bound).}}	
	For any $\alpha\in(0,1)$, using inequality~\eqref{stineq}, we obtain
	\begin{align*}
		\P(R_n^*>nx)\,
		&\geq \P\Big(S({\tilde{v}_{\lfloor\alpha{n}\rfloor}})+\max_{|v|=\lfloor\alpha{n}\rfloor}R_{\lceil(1-\alpha)n\rceil}^{*(v)}>nx\Big) \\[.15cm]
		&\geq \P\big(S({\tilde{v}_{\lfloor\alpha{n}\rfloor}})>\lfloor\alpha{n}\rfloor x\big)\cdot
		\P\Big(\max_{|v|=\lfloor\alpha{n}\rfloor} R_{\lceil(1-\alpha)n\rceil}^{*(v)}>\lceil(1-\alpha)n\rceil x\Big)\\[.15cm]
		&\geq  \P\big(\mathsf{S}_{\lfloor\alpha{n}\rfloor}>\lfloor\alpha{n}\rfloor x\big)\cdot
		\P\Big(N_{\lfloor\alpha n\rfloor}>\frac{1}{2}\cdot\E[N]^{\lfloor\alpha n\rfloor}\Big)\\[.15cm]
		&\qquad\qquad\cdot\P\Big( \Big. \max_{|v|=\lfloor\alpha{n}\rfloor}R_{\lceil(1-\alpha)n\rceil}^{*(v)}>\lceil(1-\alpha)n\rceil x \Big| N_{\lfloor\alpha n\rfloor}>\frac{1}{2}\cdot\E[N]^{\lfloor\alpha n\rfloor} \Big)\\[.15cm]
		&\geq  \P\big(\mathsf{S}_{\lfloor\alpha{n}\rfloor}>\lfloor\alpha{n}\rfloor x\big)\cdot
		\P\Big(N_{\lfloor\alpha n\rfloor}>\frac{1}{2}\cdot\E[N]^{\lfloor\alpha n\rfloor}\Big)\\[.15cm]
		&\qquad\qquad\cdot\Big(1-\Big(1-\P\big( R_{\lceil(1-\alpha)n\rceil}^{*}>\lceil(1-\alpha)n\rceil x   \big)     \Big)^{\frac{1}{2}\cdot\E[N]^{\lfloor\alpha n\rfloor}}     \Big).
	\end{align*}	
	For any $a\in[0,1]$ and $t\geq2$, we know that $1-(1-a)^t \geq at(1-at)$.
	Therefore, for all large enough $n$, we get
	\begin{align}
		\P(R_n^*>nx)
		&\geq  \P\big(\mathsf{S}_{\lfloor\alpha n\rfloor}>\lfloor\alpha n\rfloor x\big)\cdot
		\P\Big(N_{\lfloor\alpha n\rfloor}>\frac{1}{2}\cdot\E[N]^{\lfloor\alpha n\rfloor}\Big)\nonumber\\[.15cm]
		&\qquad\cdot\frac{1}{2}\cdot\E[N]^{\lfloor\alpha n\rfloor}\cdot\P\big( R_{\lceil(1-\alpha)n\rceil}^{*}>\lceil(1-\alpha)n\rceil x   \big)\nonumber\\[.15cm]
		&\qquad\qquad\cdot\Big(1-\frac{1}{2}\cdot\E[N]^{\lfloor\alpha n\rfloor}\cdot\P\big( R_{\lceil(1-\alpha)n\rceil}^{*}>\lceil(1-\alpha)n\rceil x   \big)\Big).\label{iilb1}
	\end{align}	
	Note that since $c(\theta)=\nu(\theta_0)/\theta_0$, we have
	\begin{align*}
		I(x)=\sup_{\lambda\in\R} \left\{\lambda x-\phi(\lambda) \right\}\geq \theta_0 x -\phi(\theta_0) 
		= \theta_0\left(x -c(\theta)\right)+\log\E[N].
	\end{align*}
	Now, for all $x\in(c(\theta),d(\theta)]$, we choose $\alpha_x$ such that 
	\[0<\alpha_x<\frac{\theta_0\left(x -c(\theta)\right)}{\theta_0\left(x -c(\theta)\right)+\log\E[N]},\]
	which ensures $ (1-\alpha_x)I(x)> \log\E[N]$. 
	Together with~\eqref{iiub}, this implies 
	\[
	\lim_{n\rightarrow\infty}\E[N]^{ \lfloor\alpha_x n\rfloor}\cdot\P\big( R_{\lceil(1-\alpha_x)n\rceil}^{*}>\lceil(1-\alpha_x)n\rceil x   \big)=0.
	\]
	Therefore, for $\alpha=\alpha_x$, the last term on the right-hand side of~\eqref{iilb1} tends to 1, as $n$ tends to $\infty$.	
	Also, %the Kesten-Stigum condition in 
	assumption (\hyperlink{A2}{$\mathbf{A2}$})  implies that (see Athreya and Ney~\cite{AN72})
	\[
	\lim_{n\rightarrow\infty}\P\Big(N_{\lfloor\alpha_x n\rfloor}>\frac{1}{2}\cdot\E[N]^{\lfloor\alpha_x n\rfloor}\Big)>0.
	\]
	Thus inequality~\eqref{iilb1} indicates
	\begin{align*}
		\liminf_{n\rightarrow\infty}\frac{1}{n}\log\P(R_n^*>nx)
		& \geq
		\lim_{n\rightarrow\infty}\frac{1}{n}\log \P\big(\mathsf{S}_{\lfloor\alpha_x n\rfloor}>\lfloor\alpha_x n\rfloor x\big)+
		\lim_{n\rightarrow\infty}\frac{1}{n}\log \E[N]^{\lfloor\alpha_x n\rfloor}\\[.15cm]
		&\qquad\qquad+\liminf_{n\rightarrow\infty}\frac{1}{n}\log \P\big( R_{\lceil(1-\alpha_x)n\rceil}^{*}>\lceil(1-\alpha_x)n\rceil x   \big).
	\end{align*}
	Together with  Cram\'{e}r's theorem, this implies 
	\begin{align}
		\liminf_{n\rightarrow\infty}\frac{1}{n}\log\P(R_n^*>nx)
		\geq
		\alpha_x\big(\log\E[N]-I(x)\big)
		+(1-\alpha_x)\liminf_{n\rightarrow\infty}\frac{1}{n}\log\P(R_n^*>nx).\label{iilb2}
	\end{align}
	Since $I(x)$ is finite for $x\in\left(c(\theta),d(\theta)\right]=\left(\phi'(\theta_0),\phi'(\theta)\right]$,   using Lemma~\ref{ld1}, we  have
	\begin{align*}
		\liminf_{n\rightarrow\infty}\frac{1}{n}\log\P(R_n^*>nx)
		\geq\lim_{n\rightarrow\infty}\frac{1}{n}\log\P\Big(\mathsf{S}_n+\frac{1}{\theta}\log (Y/E)>nx\Big)
		=-I(x)
		>-\infty.
	\end{align*}
	So,  from~\eqref{iilb2}, we get
	\begin{equation}
		\liminf_{n\rightarrow\infty}\frac{1}{n}\log\P(R_n^*>nx)\geq \log\E[N]-I(x)\label{iilb3}.
	\end{equation}
	%\vspace{\baselineskip}
	Combining~\eqref{iiub} and~\eqref{iilb3},  we obtain that for all $x\in \left(c(\theta), d(\theta)\right]$,
	\begin{equation}
		\lim_{n\rightarrow\infty}-\frac{1}{n}\log\P\left(R_n^*>nx\right)= I(x)-\log\E[N].
		\label{proved2}
	\end{equation}
\end{proof}

\subsubsection{Proof of (i)}\label{proofi}

\begin{proof}
	\textit{\textbf{(Upper bound).}}
	Using  Markov's inequality, we obtain that for any $x\in\R$ and any $\lambda<\theta$,
	\begin{align*}
		\P(R_n^*>nx) \leq e^{-n\lambda x}\cdot\E\big[e^{\lambda R_n^*}\big]
		&\leq e^{-n\lambda x}\cdot\E\Big[\sum_{|v|=n}e^{\lambda S(v)}Y_v^{\lambda/\theta}E_v^{-\lambda/\theta}\Big]\\%[.25cm]
		&= e^{-n\lambda x}\cdot\E[N]^n\cdot e^{n\phi(\lambda)}\cdot\E\big[Y^{\lambda/\theta}\big]\cdot\Gamma\Big(1-\frac{\lambda}{\theta}\Big).
	\end{align*}
	Since this inequality holds for all $\lambda<\theta$, we have
	\begin{align}
		\limsup_{n\rightarrow\infty} \frac{1}{n}\log\P(R_n^*>nx) &\leq \lim_{\lambda\uparrow\theta}-\lambda x + \phi(\lambda) +\log\E[N] =-\theta x + \phi(\theta) +\log\E[N]. \label{iub} 
	\end{align}
	%\vspace{\baselineskip}
	
	\textit{\textbf{(Lower bound).}}
	For every positively supported probability $\eta$, we define 
	\[
	A_n^{\eta}(\theta):=\sum_{|v|=n}e^{\theta S(v)}Z_v,
	\]
	where $\{Z_v\}_{|v|=n}$ are i.i.d. $\eta$ and are independent of the BRW. Bandyopadhyay and Ghosh~\cite{BG21} showed that 
	\begin{equation}
		\theta R_n^*(\theta,\eta)\xlongequal{d} \log A_n^{\eta}(\theta) - \log E, \label{rn*an}
	\end{equation}
	where $E\sim\mbox{Exponential}\left(1\right)$ and is independent of $\{Z_v\}_{|v|=n}$ and also of the BRW.
	Therefore  we get that for any $x>d(\theta)$ and any $\epsilon>0$,
	\begin{align}
		\P\Big(\frac{R_n^*(\theta,\mu)}{n}>x\Big)
		&=\P\Big(\frac{\log A_n^{\mu}(\theta)}{n\theta}-\frac{\log E}{n\theta}>x\Big)\nonumber\\[.25cm]
		&\geq \P\Big(\frac{\log A_n^{\mu}(\theta)}{n\theta}>d(\theta)-2\epsilon\Big)\cdot\P\Big(-\frac{\log E}{n\theta}>x-d(\theta)+2\epsilon\Big).
		\label{ilb1}
	\end{align}
	Now, take any $\theta_1\geq\theta$ and denote $\mu_1$ as the distribution of $Y^{\theta_1/\theta}$. Then we have
	\begin{align}
		\big(A_n^{\mu}(\theta)\big)^{1/\theta}=\Big(\sum_{|v|=n}e^{\theta S(v)}Y_v\Big)^{1/\theta}\geq
		\Big(\sum_{|v|=n}e^{\theta_1 S(v)}Y_v^{\theta_1/\theta}\Big)^{1/\theta_1}
		=\big(A_n^{\mu_1}(\theta_1)\big)^{1/\theta_1}. \label{ilb2}
	\end{align}
	From~\eqref{rn*an}, we also have
	\begin{align}
		\P\Big(\frac{R_n^*(\theta_1,\mu_1)}{n}>d(\theta)-\epsilon\Big)
		&=\P\Big(\frac{\log A_n^{\mu_1}(\theta_1)}{n\theta_1}-\frac{\log E}{n\theta_1}>d(\theta)-\epsilon\Big)\nonumber\\[.25cm]
		&\leq \P\Big(\frac{\log A_n^{\mu_1}(\theta_1)}{n\theta_1}>d(\theta)-2\epsilon\Big)+\P\Big(-\frac{\log E}{n\theta_1}>\epsilon\Big). \label{ilb3}
	\end{align}
	Therefore, by combining~\eqref{ilb1},~\eqref{ilb2}, and~\eqref{ilb3}, we obtain
	\begin{align}
		\P\Big(\frac{R_n^*(\theta,\mu)}{n}>x\Big)
		&\geq
		\Big( \P\Big(\frac{R_n^*(\theta_1,\mu_1)}{n}>d(\theta)-\epsilon\Big)- \P\Big(-\frac{\log E}{n\theta_1}>\epsilon\Big)  \Big)\nonumber\\%[.25cm]
		&\qquad\cdot\P\Big(-\frac{\log E}{n\theta}>x-d(\theta)+2\epsilon\Big). \label{ilb4}
	\end{align}
	Observe that for any $t>0$,
	\begin{align}
		\lim_{n\rightarrow\infty}\frac{1}{n}\log\P(-\log E>nt)
		=\lim_{n\rightarrow\infty}\frac{1}{n}\log\big(1-e^{-e^{-nt}}\big) 	=-t. \label{ilb5}
	\end{align}
	Now, for $\theta<\theta_0$ or $\theta=\theta_0<\infty$, we take $\theta_1=\theta$. In that case, $d(\theta)=c(\theta)$, which implies
	\[\lim_{n\rightarrow\infty} \frac{1}{n}\log \P\Big(\frac{R_n^*(\theta_1,\mu_1)}{n}>d(\theta)-\epsilon\Big) =0.\]
	As a result, in view of~\eqref{ilb4} and~\eqref{ilb5}, we get that for $\theta<\theta_0$ or $\theta=\theta_0<\infty$,
	\begin{equation}
		\liminf_{n\rightarrow\infty} \frac{1}{n}\log \P\Big(\frac{R_n^*(\theta,\mu)}{n}>x\Big)\geq -\theta\left(x-d(\theta)+2\epsilon \right). \label{ilb6}
	\end{equation}
	For $\theta_0<\theta<\infty$, we know that $c(\theta)<d(\theta)$. So choosing $\epsilon<d(\theta)-c(\theta)$, by part (ii) of the theorem, we have
	\[\lim_{n\rightarrow\infty} \frac{1}{n}\log \P\Big(\frac{R_n^*(\theta_1,\mu_1)}{n}>d(\theta)-\epsilon\Big) =-\Psi_{\theta_1}\left(d(\theta)-\epsilon\right)=-\Psi_{\theta}\left(d(\theta)-\epsilon\right).\]
	Now, we choose $\theta_1$ large enough such that
	$\theta_1\epsilon>\Psi_{\theta}\left(d(\theta)-\epsilon\right)$,
	which ensures
	\[\lim_{n\rightarrow\infty} \frac{1}{n}\log \Big(\P\Big(\frac{R_n^*(\theta_1,\mu_1)}{n}>d(\theta)-\epsilon\Big)
	-\P\Big(-\frac{\log E}{n\theta_1}>\epsilon\Big)
	\Big) =-\Psi_{\theta}\left(d(\theta)-\epsilon\right).\]
	Together with~\eqref{ilb4} and~\eqref{ilb5}, this implies that for $\theta_0<\theta<\infty$,
	\begin{equation}
		\liminf_{n\rightarrow\infty} \frac{1}{n}\log \P\Big(\frac{R_n^*(\theta,\mu)}{n}>x\Big)\geq -\Psi_{\theta}\left(d(\theta)-\epsilon\right)
		-\theta\left(x-d(\theta)+2\epsilon \right).
		\label{ilb6.5}
	\end{equation}
	Since $\epsilon>0$ can be chosen arbitrarily small and $\Psi_{\theta}$ is continuous in $[c(\theta),\infty)$, by combining~\eqref{ilb6} and~\eqref{ilb6.5}, we get that for any $\theta>0$,
	\begin{equation}
		\liminf_{n\rightarrow\infty} \frac{1}{n}\log \P\Big(\frac{R_n^*(\theta,\mu)}{n}>x\Big)\geq
		-\Psi_{\theta}\left(d(\theta)\right)
		-\theta\left(x-d(\theta) \right)
		= -\Psi_{\theta}\left(x\right). \label{ilb7}
	\end{equation}
	%\vspace{\baselineskip}
	Thus, by combining~\eqref{iub} and~\eqref{ilb7}, we finally obtain that for any $x>d(\theta)$,
	\begin{equation}
		\lim_{n\rightarrow\infty} -\frac{1}{n}\log\P\left(R_n^*>nx\right) = \theta x - \phi(\theta) -\log\E[N].
		\label{proved1}
	\end{equation}
\end{proof}

\subsection{Proof of Theorem~\ref{ratefnrn}}
For (iii) and (iv), the expression in Gantert and  H\"{o}felsauer~\cite{GH18} can be simplified as we did in equation~\eqref{gantertsimplify}.
The proof of (v) is essentially the proof of the part \hyperref[proofvi]{(vi)} of Theorem~\ref{ratefn} verbatim.
Note that the assumption $\E[N^{1+p}]<\infty$  in (\hyperlink{A2}{$\mathbf{A2}$}) was only required for the almost sure convergence of $R_n^*/n$ and therefore is not required to prove part (v) of Theorem~\ref{ratefnrn}. But we do need $\E[N \log N] < \infty$ for the remaining parts, as shown in Gantert and  H\"{o}felsauer~\cite{GH18}.

\section{Comparision with Branching Random Walk}
\label{sec:comparision}
We observe that for $\theta_0\leq \theta<\infty$, the lower large deviations for the laws of $\{R_n/n\}_{n\geq1}$ and $\{R_n^*(\theta,\mu)/n\}_{n\geq1}$  coincide. It should be noted that there is an error in deriving the lower large deviations for the laws of $\{R_n/n\}_{n\geq1}$ in the work of Gantert and  H\"{o}felsauer~\cite{GH18}. The first term on the right-hand side of inequality 5.9 in their paper is $\exp\big(-n\rho\min\left\{1-{x}/{x^*},1\right\}+o(n)\big)$.
They assumed that 
\[\rho\min\Big\{1-\frac{x}{x^*},1\Big\} \geq\inf\limits_{0<t\leq1}\Big\{\rho t+tI\Big(\frac{x-(1-t)x^*}{t}\Big)\Big\},\]
which does not hold for negatively large enough $x$. The proof of \hyperref[proofivv]{(iv)} and \hyperref[proofivv]{(v)} of  Theorem~\ref{ratefn} is essentially a corrected version of their techniques.

For $\theta_0<\theta<\infty$, the upper large deviation for the laws of $\{R_n^*(\theta,\mu)/n\}_{n\geq1}$ agrees with that of $\{R_n/n\}_{n\geq1}$ up to $\phi'(\theta)$.

 %%%%%%%%%%%%%%%%%%%%%%%%%%%%%%%%%%%%%%%%%%%%%%%%%%%%%%%%%%%%%%%%%%%
%%                                                               %%
%% You may add acknowledgments (optional).                       %%
%%                                                               %%
%%%%%%%%%%%%%%%%%%%%%%%%%%%%%%%%%%%%%%%%%%%%%%%%%%%%%%%%%%%%%%%%%%%

\section*{Acknowledgement}
    This work is part of the author's Ph.D. dissertation and the  
	author wishes to thank	Antar Bandyopadhyay for suggesting the problem and also for various
	discussions which he had with him as the Ph.D. supervisor. 
	The author would also like to thank  the Council of Scientific and Industrial Research, Government of India and the Indian Statistical Institute, Kolkata
	for supporting his doctoral research.
	The author  also thanks the anonymous referee, whose careful reading and detailed comments have helped to improve the paper.

%%%%%%%%%%%%%%%%%%%%%%%%%%%%%%%%%%%%%%%%%%%%%%%%%%%%%%%%%%%%%%%%%%%
%%                                                               %%
%% Use the two commands below for producing your bibliography    %%
%% with bibtex, then comment again the commands and include the  %%
%% content of the .bbl file in this file below the commands.     %%
%%                                                               %%
%%%%%%%%%%%%%%%%%%%%%%%%%%%%%%%%%%%%%%%%%%%%%%%%%%%%%%%%%%%%%%%%%%%

%\bibliographystyle{amsplain}
%\bibliography{mybib}

% add below the content of your .bbl file produced by bibtex.

\providecommand{\bysame}{\leavevmode\hbox to3em{\hrulefill}\thinspace}
\providecommand{\MR}{\relax\ifhmode\unskip\space\fi MR }
% \MRhref is called by the amsart/book/proc definition of \MR.
\providecommand{\MRhref}[2]{%
  \href{http://www.ams.org/mathscinet-getitem?mr=#1}{#2}
}
\providecommand{\href}[2]{#2}

%%%%%%%%%%%%%%%%%%%%%%%%%%%%%%%%%%%%%%%%%%%%%%%%%%%%%%%%%%%%%%%%%%%
%%                                                               %%
%% You have reached the end of your document.                    %%
%%                                                               %%
%%%%%%%%%%%%%%%%%%%%%%%%%%%%%%%%%%%%%%%%%%%%%%%%%%%%%%%%%%%%%%%%%%%

\end{document}